\newcommand{\thickhline}{%
    \noalign {\ifnum 0=`}\fi \hrule height 1pt
      \futurelet \reserved@a \@xhline
}
\newtheorem{theorem}{Theorem}[section]
\newtheorem{corollary}[theorem]{Corollary}
\newtheorem{lemma}[theorem]{Lemma}
\newtheorem{proposition}[theorem]{Proposition}
\theoremstyle{definition}
\newtheorem{example}[theorem]{Example}
\newtheorem{remark}[theorem]{Remark}
\newcommand{\FF}{\mathbb F}
\newcommand{\KK}{\mathbb K}
\DeclareMathOperator{\diag}{diag}
\DeclareMathOperator{\Inv}{Inv}     
\DeclareMathOperator{\Chinv}{Chinv} 
\DeclareMathOperator{\Hinv}{Hinv}   
\DeclareMathOperator{\End}{End}   
\begin{document}

\title{The lattice of characteristic subspaces of an endomorphism with Jordan-Chevalley decomposition}
\author{David Mingueza}
\address{Accenture, Passeig Sant Gervasi 51-53, 08022 Barcelona, Spain}
\email{david.mingueza@outlook.es}
\author{M.~Eul\`{a}lia Montoro}

\address{Departamento de Matem\'aticas e Inform\'atica. Universitat de Barcelona, \\ Gran Via de les Corts Catalanes 585,
08007 Barcelona, Spain}
\email{eula.montoro@ub.edu}
\author{Alicia Roca}
\address{Departamento de Matem\'{a}tica Aplicada, IMM, Universitat Polit\`ecnica de Val\`{e}ncia,\\ Camino de Vera s/n, 46022 Val\`encia, Spain}
\email{aroca@mat.upv.es}

\thanks{Montoro is supported by the Spanish MINECO/FEDER research project MTM 2015-65361-P, Roca is supported by grant MTM2017-83624-P MINECO }
\date{\today}

\keywords{Hyperinvariant subspaces, characteristic subspaces, lattices.}
\subjclass[2008]{06F20, 06D50, 15A03, 15A27.}

\begin{abstract}
Given an endomorphism $A$ over a finite dimensional vector space having Jordan-Chevalley decomposition, the lattices of invariant and hyperinvariant subspaces of $A$ can be obtained from 
the nilpotent part of this decomposition. 
We extend this result for lattices of characteristic subspaces. We also obtain a generalization of Shoda's Theorem about the characterization of the existence of
characteristic non hyperinvariant subspaces. 
\end{abstract}
\maketitle

\section{Introduction}\label{sec:Intro}

\medskip

The lattice of characteristic subspaces of an endomorphism over a finite dimensional space has been studied in \cite{Ast1, Ast2, MMP13, MMR16}, where structural properties of the lattice
have been given  when the minimal polynomial of the endomorphism splits over the underlying field $\mathbb{F}$. It was proved in (\cite{Ast1})
that only if $\FF=GF(2)$, the lattices of characteristic and hyperinvariant subspaces may not coincide. When the minimal polynomial of the 
endomorphism does not split over  $\mathbb{F}$, the lattice of characteristic subspaces has not been described. 
The aim of this paper is to analyze this case when the minimal polynomial of the endomorphism is separable. 

The results are based on the  Jordan-Chevalley decomposition of an endomorphism which exists if the minimal polynomial 
of the endomorphism is separable (see \cite{HoffKun71}). In particular, on perfect fields every irreducible polynomial is separable, 
therefore the Jordan-Chevalley decomposition holds.

\medskip

The paper is organized as follows: in Section \ref{sec:preli} we introduce some definitions and previous results. We present 
some lemmas showing that the study the lattices of the invariant and hyperinvariant subspaces of an endomorphism can be reduced 
to the case of an endomorphism where the minimal polynomial is a power of an irreducible polynomial $p$.

In Section \ref{The characteristic case}, out of the Jordan-Chevalley decomposition of an endomorphism into its commuting semisimple and nilpotent 
parts, we reduce the problem to the study of the lattice of characteristic subspaces of the associated nilpotent part.
Finally, we conclude that characteristic non hyperinvariant subspaces can only appear when the irreducible factor of 
the minimal polynomial is of degree 1, in other words, if $p$ splits over $\mathbb{F}$. This result extends Shoda's theorem (Theorema~\ref{Shodate}) for the separable case.

\section{Preliminaries}\label{sec:preli}

We introduce some  definitions and previous results which will be used throughout the paper.

\medskip

Let $\FF$ be a field. Let $V$  be an $n$-dimensional vector space over $\mathbb{F}$ and  $f : V \rightarrow V$ an  
endomorphism. We denote by $A\in M_{n}(\mathbb{F})$ its associated matrix with respect to given basis and  by $m_A$ the minimal polynomial of $A$. In what follows we will identify 
$f$ with $A$. The degree of a polynomial $p$ is written as $\deg(p)$, and $|\cdot|$ is the ``cardinality of''.

\medskip

The lattice of the vector subspaces of $V$ will be denoted by $\L_{\mathbb{F}}(V)$ .

A subspace $Y\subseteq V$ is 
\textit{invariant} with respect to $A$ if $AY\subseteq Y$.
We denote by $\Inv_{\mathbb{F}}(A)$ the lattice of invariant subspaces.

The centralizer of $A$ over $\mathbb{F}$ is the algebra
$Z_{\mathbb{F}}(A)=\{ B\in M_{n}(\mathbb{F}): AB = BA\}$. If we only take those endomorphisms $B$ that are automorphisms we write $Z^{*}_{\mathbb{F}}(A)=\{B\in M_{n}(\mathbb{F}): AB = BA, \ \det(B)\neq0\}$.

A subspace $Y\subseteq V$ is called \textit{hyperinvariant} with respect to $A$ if $BY\subseteq Y$ for all matrices $B\in Z_{\mathbb{F}}(A)$. We denote by $\Hinv_{\mathbb{F}}(A)$ the lattice 
of hyperinvariant subspaces.

An invariant subspace $Y\subseteq V$ with respect to $A$ is called \textit{characteristic} if $BY\subseteq Y$ for all matrices $B\in Z^{*}_{\mathbb{F}}(A)$ and $AY\subseteq Y$. We denote by $\Chinv_{\mathbb{F}}(A)$ the lattice 
of characteristic subspaces.

Obviously,
$$\Hinv_{\mathbb{F}}(A) \subseteq  \Chinv_{\mathbb{F}}(A) \subseteq \Inv_{\mathbb{F}}(A).$$

\medskip

The lattices of invariant and hyperinvariant subspaces allow the following decompositions.

\begin{proposition} \cite{BrickFill65} \label{invdecomp}
Let $A$ and $B$ be endomorphisms on finite dimensional vector spaces $V$ and $W$ respectively, over a field $\FF$. The following properties 
are equivalent:
\begin{enumerate}
\item The minimum polynomials of $A$ and $B$ are relatively prime.
\item
$\Inv_{\mathbb{F}}(A \oplus B) = \Inv_{\mathbb{F}}(A) \oplus \Inv_{\mathbb{F}}(B)$.
 \end{enumerate}
\end{proposition}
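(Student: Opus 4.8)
The plan is to prove the two implications separately, with the forward direction $(1) \Rightarrow (2)$ resting on a Bézout argument and the reverse direction $(2) \Rightarrow (1)$ on an explicit ``diagonal'' construction.

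For $(1) \Rightarrow (2)$, I would first note that the containment $\Inv_{\FF}(A) \oplus \Inv_{\FF}(B) \subseteq \Inv_{\FF}(A \oplus B)$ is immediate, since $(A \oplus B)(Y_1 \oplus Y_2) = AY_1 \oplus BY_2$; the content lies in the reverse containment. Because $m_A$ and $m_B$ are relatively prime, Bézout yields polynomials $s, t \in \FF[x]$ with $s\, m_A + t\, m_B = 1$. Evaluating at $A \oplus B$ and using $m_A(A) = 0$ and $m_B(B) = 0$, one checks that $P := t(A \oplus B)\, m_B(A \oplus B)$ acts as the identity on $V \oplus 0$ and as zero on $0 \oplus W$; that is, $P$ is the projection onto $V$ along $W$, and $I - P$ the projection onto $W$. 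The crucial point is that $P$ is a \emph{polynomial} in $A \oplus B$, so every $Y \in \Inv_{\FF}(A \oplus B)$ is invariant under $P$. Hence for $y \in Y$ both $Py$ and $(I-P)y$ lie in $Y$, which forces $Y = (Y \cap V) \oplus (Y \cap W)$, where $Y \cap V$ and $Y \cap W$ are invariant under $A$ and $B$ respectively. This is precisely the assertion $Y \in \Inv_{\FF}(A) \oplus \Inv_{\FF}(B)$.

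For $(2) \Rightarrow (1)$, I would argue by contraposition: assuming $\gcd(m_A, m_B) \neq 1$, I exhibit an invariant subspace of $A \oplus B$ that does not split. Let $p$ be a common irreducible factor of $m_A$ and $m_B$. Using the decomposition of $V$ and $W$ into cyclic $\FF[x]$-modules, I would locate vectors $v \in V$ and $w \in W$ whose annihilators are exactly $(p)$, so that $U_1 = \FF[x]v$ and $U_2 = \FF[x]w$ are invariant subspaces, both isomorphic to $\FF[x]/(p)$ as $\FF[x]$-modules. Let $\phi : U_1 \to U_2$ be the $\FF[x]$-isomorphism sending $v \mapsto w$, and consider its graph $\Gamma = \{(u, \phi(u)) : u \in U_1\} \subseteq V \oplus W$. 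Since $\phi$ commutes with the action of $x$, the subspace $\Gamma$ is invariant under $A \oplus B$; but injectivity of $\phi$ gives $\Gamma \cap (V \oplus 0) = 0 = \Gamma \cap (0 \oplus W)$, while $\dim \Gamma = \deg(p) > 0$. Thus $\Gamma \neq (\Gamma \cap V) \oplus (\Gamma \cap W)$, so $\Gamma \notin \Inv_{\FF}(A) \oplus \Inv_{\FF}(B)$, contradicting $(2)$.

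The routine verifications---that $P$ is indeed the claimed projection, and that $\Gamma$ is invariant---are short. I expect the main obstacle to be the module-theoretic step in the reverse direction: producing vectors annihilated \emph{exactly} by $p$ rather than by a proper multiple. This requires extracting, from a cyclic summand whose annihilator is $p^{a} h$ with $p \nmid h$, the element obtained by applying $p^{a-1} h$ to a generator. Once those vectors are in hand, the graph construction and the failure of decomposability follow cleanly.
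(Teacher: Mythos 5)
Your proof is correct. Note that the paper itself offers no proof of this proposition --- it is quoted from Brickman--Fillmore --- so there is nothing internal to compare against; your argument is the standard and complete one. The forward direction works exactly as you say: writing $s\,m_A + t\,m_B = 1$ and evaluating at $A\oplus B$ shows that the projection onto $V$ along $W$ is a polynomial in $A\oplus B$, hence preserves every invariant subspace $Y$, forcing $Y=(Y\cap V)\oplus(Y\cap W)$. For the converse, you correctly identify and resolve the only delicate point: from a cyclic summand with annihilator $p^{a}h$, $p\nmid h$, the vector $p^{a-1}h\cdot g$ has annihilator exactly $(p)$, and the graph of the resulting $\FF[x]$-isomorphism between the two copies of $\FF[x]/(p)$ is an $(A\oplus B)$-invariant subspace meeting $V$ and $W$ trivially, hence not of the form $Y_1\oplus Y_2$. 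No gaps.
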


\begin{proposition} \cite{FillHL77}\label{hinvdecomp}
Let $A$ and $B$ be endomorphisms on finite dimensional vector spaces $V$ and $W$ respectively, over a field $\FF$. The following properties
are equivalent:
\begin{enumerate}
\item The minimum polynomials of $A$ and $B$ are relatively prime.
\item
$\Hinv_{\mathbb{F}}(A \oplus B) = \Hinv_{\mathbb{F}}(A) \oplus \Hinv_{\mathbb{F}}(B)$.

 \end{enumerate}
\end{proposition}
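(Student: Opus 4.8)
The plan is to prove the two implications separately, leaning on the invariant analogue (Proposition~\ref{invdecomp}) and on how the centralizer behaves under the direct sum.

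For $(1)\Rightarrow(2)$ I would first analyze $Z_{\mathbb{F}}(A\oplus B)$. Writing an element $C\in Z_{\mathbb{F}}(A\oplus B)$ in block form relative to $V\oplus W$, the off-diagonal blocks satisfy Sylvester-type equations $AX=XB$ and $BX'=X'A$. When $m_A$ and $m_B$ are coprime, $A$ and $B$ share no eigenvalue over an algebraic closure, so these equations force $X=X'=0$; hence $Z_{\mathbb{F}}(A\oplus B)$ consists exactly of the block-diagonal matrices $C_1\oplus C_2$ with $C_1\in Z_{\mathbb{F}}(A)$ and $C_2\in Z_{\mathbb{F}}(B)$. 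Now take $Y\in\Hinv_{\mathbb{F}}(A\oplus B)$. In particular $Y\in\Inv_{\mathbb{F}}(A\oplus B)$, so Proposition~\ref{invdecomp} gives $Y=Y_1\oplus Y_2$ with $Y_1\in\Inv_{\mathbb{F}}(A)$ and $Y_2\in\Inv_{\mathbb{F}}(B)$. Testing $Y$ against the block-diagonal centralizer elements $C_1\oplus I$ and $I\oplus C_2$ yields $C_1Y_1\subseteq Y_1$ for every $C_1\in Z_{\mathbb{F}}(A)$ and $C_2Y_2\subseteq Y_2$ for every $C_2\in Z_{\mathbb{F}}(B)$, that is $Y_1\in\Hinv_{\mathbb{F}}(A)$ and $Y_2\in\Hinv_{\mathbb{F}}(B)$. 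The reverse inclusion is immediate from the same block-diagonal description, so the two lattices coincide.

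For $(2)\Rightarrow(1)$ I would argue by contraposition. Assume $m_A$ and $m_B$ have a common irreducible factor $p$, and let $V_p\subseteq V$ and $W_p\subseteq W$ be the corresponding (nonzero) primary components. Since the projection onto a primary component is a polynomial in the endomorphism, it lies in $Z_{\mathbb{F}}(A)$, so $V_p\in\Hinv_{\mathbb{F}}(A)$ and therefore $V_p\oplus 0\in\Hinv_{\mathbb{F}}(A)\oplus\Hinv_{\mathbb{F}}(B)$. I then show $V_p\oplus 0\notin\Hinv_{\mathbb{F}}(A\oplus B)$, which breaks the equality. Viewing $V$ and $W$ as $\mathbb{F}[x]$-modules via $A$ and $B$, the common factor $p$ guarantees a nonzero module homomorphism $X\colon V_p\to W_p$, since both modules admit $\mathbb{F}[x]/(p)$ as a subquotient (one maps $V_p\twoheadrightarrow\mathbb{F}[x]/(p)\hookrightarrow W_p$ through a cyclic summand). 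Extending $X$ by zero on the remaining primary components of $V$ gives $X\colon V\to W$ with $XA=BX$, $X\neq 0$, and $\im X\subseteq W_p$. Then $C=\left(\begin{smallmatrix}0&0\\ X&0\end{smallmatrix}\right)\in Z_{\mathbb{F}}(A\oplus B)$ sends some $(v,0)\in V_p\oplus 0$ to $(0,Xv)$ with $Xv\neq 0$, so $C(V_p\oplus 0)\not\subseteq V_p\oplus 0$. Hence $\Hinv_{\mathbb{F}}(A)\oplus\Hinv_{\mathbb{F}}(B)\neq\Hinv_{\mathbb{F}}(A\oplus B)$.

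The main obstacle is the construction in the second implication: producing the nonzero intertwiner $X$ between the primary components $V_p$ and $W_p$. This is exactly where the common irreducible factor is used, and it is the converse phenomenon to the vanishing of off-diagonal blocks exploited in the first implication — a shared factor is precisely what makes the Sylvester-type equation $XA=BX$ nontrivially solvable. Everything else reduces to bookkeeping with the block structure of the centralizer together with the invariant decomposition already supplied by Proposition~\ref{invdecomp}.
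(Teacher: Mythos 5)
Your proof is correct. Note, however, that the paper does not actually prove Proposition~\ref{hinvdecomp}: it is quoted from Fillmore--Herrero--Longstaff \cite{FillHL77} without proof, so there is no in-paper argument to match yours against. The closest comparison is the paper's proof of the analogous statement for characteristic subspaces (the lemma opening Section~\ref{The characteristic case}), which follows the same overall strategy as yours: for $(1)\Rightarrow(2)$ it also invokes the block-diagonality of the centralizer, $Z(A\oplus B)=Z(A)\oplus Z(B)$ (citing \cite{Supru68} rather than solving the Sylvester equations $AX=XB$ directly, as you do via the coprimality of the minimal polynomials); for $(2)\Rightarrow(1)$ it likewise reduces to exhibiting a nonzero intertwiner $X$ with $XA=BX$ when $\gcd(m_A,m_B)=q$ has positive degree, but it builds $X$ concretely by splitting off a companion-matrix block $C_q$ from each of $A$ and $B$ and taking $X=\left(\begin{smallmatrix}U&0\\0&0\end{smallmatrix}\right)$ with $U\in Z^*(C_q)$, whereas you construct $X$ module-theoretically as a composite $V_p\twoheadrightarrow\mathbb{F}[x]/(p)\hookrightarrow W_p$ through cyclic summands. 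The two constructions buy the same thing; yours is basis-free and makes transparent exactly where the common irreducible factor is used, while the paper's matrix version is more explicit. One small caveat in your write-up: the hyperinvariance of $V_p$ is better justified by $V_p=\ker p(A)^{k}$ (kernels of polynomials in $A$ are hyperinvariant) than by the remark that the primary projection lies in $Z_{\mathbb{F}}(A)$, which by itself does not immediately give invariance of its image under all of $Z_{\mathbb{F}}(A)$ --- though the conclusion is of course true.
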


As a consequence, if $m_A=p_1^{k_1} \cdots p_r^{k_r}$ is the prime decomposition of $m_A$ for a given $A\in M_{n}(\mathbb{F})$, taking into account the primary decomposition 
$$V=V_1\oplus\cdots \oplus V_r,  \quad V_i=\ker(p_i(A)^{k_i}), \quad i=1, \ldots, r,$$
and $A_i=A|V_i$, \ $i=1, \ldots r$, the next result is satisfied (see \cite{BrickFill65, FillHL77}).

\begin{proposition}
Let $A\in M_{n}(\mathbb{F})$ and $m_A=p_1^{k_1} \cdots p_r^{k_r}$ with $p_1, \ldots, p_r$ distinct irreducible polynomials. Then,
\begin{enumerate}
\item
$\Inv_{\mathbb{F}}(A) = \Inv_{\mathbb{F}}(A_1)  \oplus \cdots \oplus \Inv_{\mathbb{F}}(A_r).$
\item
$\Hinv_{\mathbb{F}}(A) = \Hinv_{\mathbb{F}}(A_1)  \oplus \cdots \oplus \Hinv_{\mathbb{F}}(A_r).$
\end{enumerate}

\end{proposition}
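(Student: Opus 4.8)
The plan is to reduce the $r$-fold statement to the two-factor case supplied by Propositions~\ref{invdecomp} and~\ref{hinvdecomp}, and then to induct on $r$. First I would record the two facts furnished by the primary decomposition. Since $V=V_1\oplus\cdots\oplus V_r$ with each $V_i=\ker(p_i(A)^{k_i})$ being $A$-invariant, the endomorphism $A$ acts as the block-diagonal $A_1\oplus\cdots\oplus A_r$ with respect to a basis adapted to this internal decomposition, where $A_i=A|V_i$; moreover the minimal polynomial of $A_i$ is exactly $p_i^{k_i}$, because $k_i$ is the precise exponent of $p_i$ in $m_A$. As $p_1,\dots,p_r$ are pairwise distinct irreducible polynomials, the powers $p_1^{k_1},\dots,p_r^{k_r}$ are pairwise coprime.

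Next I would set up the induction on $r$, the base case $r=1$ being trivial and the case $r=2$ being exactly Propositions~\ref{invdecomp} and~\ref{hinvdecomp}. For the inductive step write $A=A_1\oplus C$ with $C=A_2\oplus\cdots\oplus A_r$. The minimal polynomial of $C$ is $\operatorname{lcm}(p_2^{k_2},\dots,p_r^{k_r})=p_2^{k_2}\cdots p_r^{k_r}$, which is coprime to $p_1^{k_1}$, the minimal polynomial of $A_1$, since $p_1$ is irreducible and distinct from every $p_i$ with $i\ge 2$. Hence condition (1) of Proposition~\ref{invdecomp} holds for the pair $(A_1,C)$, and its condition (2) yields
$$\Inv_{\mathbb{F}}(A)=\Inv_{\mathbb{F}}(A_1)\oplus\Inv_{\mathbb{F}}(C).$$
Applying the induction hypothesis to $C$, which has $r-1$ primary components, gives $\Inv_{\mathbb{F}}(C)=\Inv_{\mathbb{F}}(A_2)\oplus\cdots\oplus\Inv_{\mathbb{F}}(A_r)$, and substitution completes part (1). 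Part (2) is word for word the same argument, with Proposition~\ref{hinvdecomp} replacing Proposition~\ref{invdecomp}.

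The step needing the most care is not the arithmetic of minimal polynomials but the bookkeeping behind the symbol $\oplus$ on lattices: one must check that the external direct sum of lattices is associative in the sense that the nested object $\Inv_{\mathbb{F}}(A_1)\oplus\bigl(\Inv_{\mathbb{F}}(A_2)\oplus\cdots\oplus\Inv_{\mathbb{F}}(A_r)\bigr)$ is canonically identified with $\Inv_{\mathbb{F}}(A_1)\oplus\cdots\oplus\Inv_{\mathbb{F}}(A_r)$, that is, that a subspace lies in the right-hand sublattice precisely when it splits as $Y_1\oplus\cdots\oplus Y_r$ with $Y_i\in\Inv_{\mathbb{F}}(A_i)$. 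This compatibility is what lets the regrouping of the factors two at a time proceed without loss, and the same remark applies verbatim to $\Hinv_{\mathbb{F}}$. I expect no genuine obstacle beyond confirming this associativity and checking that the coprimality hypothesis is preserved under each regrouping, both of which are routine once the primary decomposition has been fixed.
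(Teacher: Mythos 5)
Your proof is correct and follows exactly the route the paper intends: the paper states this proposition as an immediate consequence of Propositions~\ref{invdecomp} and~\ref{hinvdecomp} applied to the primary decomposition (citing the original references without writing out the induction), and your argument simply makes that induction on $r$ explicit, including the routine verification of coprimality and the associativity of the lattice direct sum. No issues.
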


If $p(x)\in \FF[x]$ is an irreducible polynomial and  $\alpha \notin \mathbb{F}$
is a root of $p(x)$, we denote by $\FF(\alpha)$ the minimal extension of $\mathbb{F}$ containing  $\alpha$. The next result can be found 
in \cite{BrickFill65}. Although a proof of the following property 4 was included in  \cite{BrickFill65}, we give here a simple proof of the 
whole lemma.

\begin{lemma} \cite{BrickFill65} \label{mainlema}
Let $A\in M_{n}(\mathbb{F})$. Assume that
$m_A=p_{0}+p_{1}x+\ldots+p_{s}x^{s} \in \FF[x]$ is  irreducible, and that  $\alpha$ is a root of $p$ such that $\alpha \notin \mathbb{F}$.
 Let $\mathbb{K}=\{a_{0}I_{n}+a_{1}A+\ldots+a_{s-1}A^{s-1}\,|a_{i}\in \mathbb{F}\}$ be the algebra of polynomials of degree at most $s$. Then,
 \begin{enumerate}
  \item $\mathbb{K}$ is a field isomorphic to $\mathbb{F}(\alpha)$.
  \item $V$ is a $\mathbb{K}-$vector space.
  \item $A$ is $\mathbb{K}-$linear.
  \item $\Inv_{\mathbb{F}}(A)=\L_{\mathbb{K}}(V)$.
 \end{enumerate}
\end{lemma}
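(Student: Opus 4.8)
The plan is to identify the algebra $\mathbb{K}$ with the full polynomial algebra $\mathbb{F}[A]$ and then read off each of the four claims from that identification. First I would observe that the evaluation map $\mathbb{F}[x] \to M_n(\mathbb{F})$, $q(x) \mapsto q(A)$, is an $\mathbb{F}$-algebra homomorphism whose kernel is the ideal generated by the minimal polynomial $m_A$. Since $m_A$ is irreducible, $(m_A)$ is a maximal ideal, so by the first isomorphism theorem the image $\mathbb{F}[A]$ is a field isomorphic to $\mathbb{F}[x]/(m_A) \cong \mathbb{F}(\alpha)$. Because $\deg m_A = s$ with leading coefficient $p_s \neq 0$, the relation $m_A(A) = 0$ lets one rewrite $A^s$, and inductively every higher power of $A$, as an $\mathbb{F}$-linear combination of $I_n, A, \dots, A^{s-1}$; hence $\mathbb{F}[A] = \mathbb{K}$ and claim (1) follows.

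For (2) and (3): once $\mathbb{K}$ is known to be a field of endomorphisms of $V$ containing $I_n$, scalar multiplication $\mathbb{K} \times V \to V$ is given by the natural action $(k, v) \mapsto kv$; the vector-space axioms are inherited from the fact that $\mathbb{F}^n$ is a module over the matrix ring $M_n(\mathbb{F})$, and $I_n$ acts as the identity, giving (2). Since $\alpha \notin \mathbb{F}$ forces $s \geq 2$, we have $A \in \mathbb{K}$; more to the point, $A$ commutes with every polynomial in $A$, so $A(kv) = k(Av)$ for all $k \in \mathbb{K}$, $v \in V$, which is exactly $\mathbb{K}$-linearity, giving (3).

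For the main assertion (4): I would prove the two inclusions separately using the identity $\mathbb{K} = \mathbb{F}[A]$. If $Y \in \Inv_{\mathbb{F}}(A)$, then $Y$ is $\mathbb{F}$-linear and $AY \subseteq Y$, hence $A^j Y \subseteq Y$ for all $j$ and $Y$ is stable under every $\mathbb{F}$-linear combination of powers of $A$; since these are precisely the elements of $\mathbb{K}$, the subspace $Y$ is a $\mathbb{K}$-subspace, i.e.\ $Y \in \L_{\mathbb{K}}(V)$. Conversely, if $Y \in \L_{\mathbb{K}}(V)$, then, because $\mathbb{F} I_n \subseteq \mathbb{K}$, the set $Y$ is in particular an $\mathbb{F}$-subspace, and since $A \in \mathbb{K}$ it satisfies $AY \subseteq Y$, so $Y \in \Inv_{\mathbb{F}}(A)$.

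The only delicate point---and the step I would treat most carefully---is the equality $\mathbb{K} = \mathbb{F}[A]$, since the whole argument for (4) rests on characterizing $\mathbb{F}$-invariance as $\mathbb{K}$-stability; everything else is bookkeeping with the first isomorphism theorem and the module structure of $V$. I expect no genuine obstacle beyond verifying that reduction of higher powers of $A$ modulo $m_A$ keeps one inside the span of $I_n, \dots, A^{s-1}$.
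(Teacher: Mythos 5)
Your proposal is correct and follows essentially the same route as the paper: identify $\mathbb{K}$ with $\mathbb{F}[A]\cong\mathbb{F}[x]/(m_A)\cong\mathbb{F}(\alpha)$ (the paper simply writes down the isomorphism $\Gamma$ that your first-isomorphism-theorem argument justifies), and then characterize $A$-invariance as stability under all of $\mathbb{K}$ to get $\Inv_{\mathbb{F}}(A)=\L_{\mathbb{K}}(V)$. Your write-up is somewhat more detailed than the paper's (which states the key equivalences without elaboration), but there is no substantive difference in approach.
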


\begin{proof}
 \begin{enumerate}
  \item The following application
  $$\Gamma:\mathbb{K}\longrightarrow\mathbb{F}(\alpha),$$
 with $\Gamma(a_{0}I_{n}+a_{1}A\ldots+a_{s-1}A^{s-1})=a_{0}+a_{1}\alpha+\ldots+a_{s-1}\alpha^{s-1}$ is an isomorphism. 
  \item For all $\lambda=a_{0}I_{n}+a_{1}A+\ldots+a_{s-1}A^{s-1}\in\mathbb{K}$, it is satisfied that
  $$\lambda v=(a_{0}I_{n}+a_{1}A+\ldots+a_{s-1}A^{s-1})v\in V,\quad \forall v\in V.$$
  \item For all $\lambda \in \mathbb{K}$, $A\lambda v=\lambda Av$.
  \item The following equivalences hold, 
  $$ W\in \Inv_{\mathbb{F}}(A)\Leftrightarrow AW\subseteq W\Leftrightarrow  \lambda W\subseteq W,\,\forall \lambda\in\mathbb{K} \Leftrightarrow W\in \L_{\mathbb{K}}(V).$$
 \end{enumerate}
\end{proof}

Notice that properties $1-4$ of this lemma are trivially true if $s=1$.
\medskip

A polynomial is called \textit{$p$-primary} if it is of the form $p^r$ for some irreducible polynomial $p$ and a positive integer $r$.
A polynomial is called \textit{separable} if it is coprime with its derivate (\cite{Lang02}).

A linear operator S on a finite-dimensional vector space is \textit{semisimple} if every S-invariant subspace has a complementary S-invariant subspace. A linear operator N on a finite-dimensional 
vector space is \textit{nilpotent} if $N^k = 0$ for some positive integer k.

\medskip
The next lemma contains the Jordan-Chevalley decomposition of a matrix, which plays a key role in this work. The result 
can be found in \cite{HoffKun71}.

\begin{lemma}\label{sn}(Jordan-Chevalley decomposition)
Let $A\in M_{n}(\mathbb{F})$ be a matrix  with $m_{A}=p^r$, where $p$ is irreducible and separable. Then,  there is a unique decomposition 
$$A=S+N,$$ where $S$ is  semisimple, $N$ is nilpotent and $SN=NS$. Moreover,  $S$ and $N$ are polynomials in $A$. 
\end{lemma}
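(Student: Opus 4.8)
The plan is to produce the semisimple part as an explicit polynomial in $A$ and to read off $N$ as the complementary polynomial. Writing $S=g(A)$ for a polynomial $g\in\FF[x]$ still to be determined, and $N=A-g(A)$, two congruence conditions on $g$ already force the conclusion. First, if $g(x)\equiv x\pmod{p(x)}$, then $x-g(x)=p(x)h(x)$ for some $h\in\FF[x]$, so $N=p(A)h(A)$ and $N^{r}=p(A)^{r}h(A)^{r}=m_A(A)\,h(A)^{r}=0$; hence $N$ is nilpotent. Second, if $p(g(x))\equiv 0\pmod{p(x)^{r}}$, then $m_A=p^{r}$ divides $p(g(x))$, so $p(S)=p(g(A))=0$; since $p$ is irreducible this forces $m_S=p$, and by Lemma~\ref{mainlema} applied to $S$ the $S$-invariant subspaces are exactly the subspaces of $V$ regarded as a vector space over the field $\FF[x]/(p)$ (the case $\deg p=1$ being trivial), each of which has a complement, so $S$ is semisimple. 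Finally $S$ and $N$ are both polynomials in $A$, so they commute, $SN=NS$. Thus everything reduces to finding a single $g$ satisfying the two displayed congruences.

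To construct such a $g$ I would lift by Newton's iteration, starting from $g_{0}(x)=x$, which already satisfies $g_{0}\equiv x\pmod p$ and $p(g_{0}(x))=p(x)\equiv 0\pmod p$. Assuming inductively that $g_{j}\equiv x\pmod p$ and $p(g_{j})\equiv 0\pmod{p^{j+1}}$, I set $g_{j+1}=g_{j}+p^{j+1}c$ and expand by Taylor: $p(g_{j+1})\equiv p(g_{j})+p'(g_{j})\,p^{j+1}c\pmod{p^{j+2}}$, the omitted terms carrying a factor $p^{2(j+1)}$ with $2(j+1)\ge j+2$. Writing $p(g_{j})=p^{j+1}d$, it remains to solve $d+p'(g_{j})c\equiv 0\pmod p$ for $c$. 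This is the crux, and it is exactly where separability enters: since $g_{j}\equiv x\pmod p$ we have $p'(g_{j})\equiv p'(x)\pmod p$, and separability gives $\gcd(p,p')=1$, so $p'(x)$ is invertible modulo $p$ and $c$ can be chosen; moreover $g_{j+1}\equiv x\pmod p$ is preserved. After $r-1$ steps one reaches $p(g_{r-1})\equiv 0\pmod{p^{r}}$, and $g:=g_{r-1}$ is the desired polynomial. (Equivalently, this is Hensel's lemma applied to the simple root $\bar x$ of $p$ in the ring $\FF[x]/(p^{r})$, simplicity being guaranteed by $p'(\bar x)\neq 0$.)

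For uniqueness, suppose $A=S'+N'$ is a second decomposition with $S'$ semisimple, $N'$ nilpotent and $S'N'=N'S'$. Then $S'$ commutes with $A=S'+N'$, and since $S$ and $N$ are polynomials in $A$, $S'$ commutes with both $S$ and $N$; likewise $N'$ commutes with $S$ and $N$. Over a splitting field the commuting pair $S',N'$ is simultaneously triangularizable, so the eigenvalues of $A=S'+N'$ equal those of $S'$ (the nilpotent $N'$ contributing zero diagonal), hence are roots of $p$; as $S'$ is semisimple $m_{S'}$ is squarefree, and irreducibility of $p$ then forces $m_{S'}=p$. Thus $S'$, like $S$, is diagonalizable over the splitting field $\mathbb{L}$ of the separable polynomial $p$. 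From $S+N=S'+N'$ we get $S-S'=N'-N$: the left-hand side is a difference of two commuting $\mathbb{L}$-diagonalizable operators, hence $\mathbb{L}$-diagonalizable, while the right-hand side is a difference of two commuting nilpotents, hence nilpotent. An operator that is simultaneously diagonalizable and nilpotent is $0$, so $S=S'$ and $N=N'$. The single genuine obstacle in the whole argument is the solvability of the Newton step, which rests squarely on the separability hypothesis $\gcd(p,p')=1$.
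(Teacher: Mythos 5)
The paper states this lemma without proof, simply citing Hoffman--Kunze, so there is no in-paper argument to compare against; your proof is correct and is essentially the standard one from that source. The Hensel/Newton lifting of $g_{0}(x)=x$ modulo increasing powers of $p$ is exactly where separability enters (invertibility of $p'$ modulo $p$), and you handle that, the nilpotency of $N=A-g(A)$, the identification $m_{S}=p$ yielding semisimplicity via Lemma~\ref{mainlema}, and the uniqueness argument (simultaneous diagonalizability of $S,S'$ over the splitting field of the separable $p$ versus nilpotency of $N'-N$) all correctly. The one step you assert without justification is that a semisimple operator has squarefree minimal polynomial, used to conclude $m_{S'}=p$ in the uniqueness part; this is a standard equivalence (proved in Hoffman--Kunze) but deserves an explicit citation, since the paper's definition of semisimple is the complemented-invariant-subspace one.
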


\begin{remark} \cite{BrickFill65}
 
If $A$ has Jordan-Chevalley decomposition then there exist a transformation into a rational canonical form such that $A$ is similar to: 
 \begin{equation*}\label{Jdecomp}
 \diag(C_1, C_2, \ldots, C_m),
 \end{equation*}
 with
$$C_{i}=\left(\begin{array}{ccccc}C& 0 &\ldots&0&0\\
                                   I_{s} &C&\ldots&0&0\\
                                   \vdots&\vdots&\ddots&\vdots & \vdots\\
                                    0&0&\ldots &C&0\\
                                   0&0& \ldots&I_{s}&C
                 \end{array}\right)\in M_{n_{i}}(\mathbb{F}), \quad i=1,2, \ldots, m,$$
where $C$ is the companion matrix of $p$, and
$n_{1}\geq n_{2}\geq \dots \geq n_{m}>0$,  $n_{1}+\ldots+n_{m}=n$,  $n_{1}=sr$ (where $s=\deg(p)$), then
$$S=\left(\begin{array}{ccccc}C& 0 &\ldots&0&0\\
                                   0&C&\ldots&0&0\\
                                   \vdots&\vdots&\ddots&\vdots & \vdots\\
                                    0&0&\ldots &C&0\\
                                   0&0& \ldots&0&C
                 \end{array}\right),
                 \quad
              N=   \left(\begin{array}{ccccc}0& 0 &\ldots&0&0\\
                                   I_{s} &0&\ldots&0&0\\
                                   \vdots&\vdots&\ddots&\vdots & \vdots\\
                                    0&0&\ldots &0&0\\
                                   0&0& \ldots&I_{s}&0
                 \end{array}\right).$$

\end{remark}

Taking advantage of the Jordan-Chevalley decomposition it can be proved that the lattices of the invariant and hyperinvariant subspaces of $A$ over $\mathbb{F}$ can be obtained as
lattices of the corresponding subspaces  of a nilpotent matrix over a different field $\mathbb{K}$ (see \cite{BrickFill65} and \cite{FillHL77}, respectively). The results are included in the next theorem.
Although proofs are sketched in~\cite{BrickFill65,FillHL77}, we provided them in detail.

\begin{theorem}  \label{inv-hyperinv}
Let $A\in M_{n}(\mathbb{F})$. Assume that $m_A$ is $p-$primary $(m_{A}=p^{r})$ with $p$ separable and $\deg (p)=s$. Let $A=S+N$ be the Jordan-Chevalley decomposition of $A$. Let $\mathbb{K}=\{a_{0}I_{n}+a_{1}S+\ldots+a_{s-1}S^{s-1}\,|a_{i}\in \mathbb{F}\}$.
 Then,
 \begin{enumerate}
\item $\Inv_{\mathbb{F}}(A)=\Inv_{\mathbb{K}}(A)=\Inv_{\mathbb{K}}(N).$
\item $\Hinv_{\mathbb{F}}(A)=\Hinv_{\mathbb{K}}(A)=\Hinv_{\mathbb{K}}(N).$
\end{enumerate}
\end{theorem}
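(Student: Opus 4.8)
The plan is to apply Lemma~\ref{mainlema} not to $A$ but to its semisimple part $S$. First I would verify that $S$ is admissible: from the rational canonical form, $S=\diag(C,\ldots,C)$ with $C$ the companion matrix of $p$, so the minimal polynomial of $S$ is exactly $p$, which is irreducible and separable of degree $s$ (equivalently, $S$ is semisimple and all its roots are roots of the irreducible $p$, forcing $m_S=p$). Hence Lemma~\ref{mainlema}, read with $S$ in the role of $A$, shows that $\KK=\{a_0I_n+a_1S+\cdots+a_{s-1}S^{s-1}\}$ is a field isomorphic to $\FF(\alpha)$, that $V$ is a $\KK$-vector space, and that $\Inv_\FF(S)=\mathcal{L}_\KK(V)$, the lattice of all $\KK$-subspaces of $V$. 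Two bookkeeping facts will be used repeatedly. Fact (i): since $A=S+N$ with $SN=NS$, both $A$ and $N$ commute with $S$, hence with every polynomial in $S$, so $A$ and $N$ are $\KK$-linear and all four lattices $\Inv_\KK(A)$, $\Inv_\KK(N)$, $\Hinv_\KK(A)$, $\Hinv_\KK(N)$ are well defined. Fact (ii): every $\KK$-subspace $W$ satisfies $SW\subseteq W$, and every $\KK$-linear map commutes with $S$, because $S$ is precisely the scalar action of the element $S\in\KK$ on $V$.

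For part (1) I would establish the two equalities in turn. For $\Inv_\FF(A)=\Inv_\KK(A)$: if $W$ is an $\FF$-subspace with $AW\subseteq W$, then $SW\subseteq W$ because $S$ is a polynomial in $A$, so $W\in\Inv_\FF(S)=\mathcal{L}_\KK(V)$ by Lemma~\ref{mainlema}(4); thus $W$ is a $\KK$-subspace invariant under the $\KK$-linear map $A$, and the reverse inclusion is trivial since $\FF\subseteq\KK$. For $\Inv_\KK(A)=\Inv_\KK(N)$: if $W$ is a $\KK$-subspace, Fact (ii) gives $SW\subseteq W$, so from $A=S+N$ one reads off $AW\subseteq W\iff NW\subseteq W$.

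For part (2) the heart of the matter is to identify the centralizers. I would first prove $Z_\FF(A)=Z_\KK(A)$: an $\FF$-linear $B$ with $AB=BA$ commutes with the polynomial $S$ in $A$, hence with all of $\KK$, so $B$ is $\KK$-linear; conversely every $\KK$-linear map is in particular $\FF$-linear. Next $Z_\KK(A)=Z_\KK(N)$: any $\KK$-linear $B$ commutes with $S$ by Fact (ii), so $AB=BA\iff NB=BN$. Since a hyperinvariant subspace is in particular $A$-invariant, part (1) forces every subspace appearing in any of the three $\Hinv$ lattices to be a $\KK$-subspace; as the relevant centralizers $Z_\FF(A)$, $Z_\KK(A)$, $Z_\KK(N)$ coincide, the defining condition $BW\subseteq W$ for all $B$ in the centralizer is literally the same in all three cases, giving $\Hinv_\FF(A)=\Hinv_\KK(A)=\Hinv_\KK(N)$.

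I expect the only genuinely delicate step to be the verification that $m_S=p$, so that Lemma~\ref{mainlema} may be invoked for $S$; this is where the separability hypothesis enters, both to guarantee the Jordan--Chevalley decomposition of Lemma~\ref{sn} and to ensure $S$ has squarefree (indeed irreducible) minimal polynomial. After that, the whole argument is a transfer between the $\FF$- and $\KK$-structures: Facts (i) and (ii) show that this transfer changes neither the family of invariant subspaces nor the family of commuting maps. The case $s=1$ is a tautology, since then $\KK=\FF$, $S=\alpha I_n$, and $N=A-\alpha I_n$.
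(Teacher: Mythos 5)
Your proposal is correct and follows essentially the same route as the paper: it identifies $\mathbb{K}$ as the field generated by $S$ (Lemma~\ref{mainlema} applied to $S$), uses that $S$ is a polynomial in $A$ to show every $A$-invariant $\mathbb{F}$-subspace is a $\mathbb{K}$-subspace, and then passes from $A$ to $N=A-S$ and identifies the centralizers $Z_{\mathbb{F}}(A)=Z_{\mathbb{K}}(A)=Z_{\mathbb{K}}(N)$. You simply spell out in more detail the steps the paper's terse proof leaves implicit (in particular that $m_S=p$ and that $S\in Z_{\mathbb{F}}(A)$ forces hyperinvariant subspaces to be $\mathbb{K}$-subspaces).
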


\begin{proof}
\begin{enumerate}
\item $W\in \Inv_{\mathbb{F}}(A)\Leftrightarrow W\in\Inv_{\mathbb{K}}(A)$ (because $A$ is $\mathbb{K}-$ linear).
Moreover, since $S\in \mathbb{K}$
$$ \Inv_{\mathbb{K}}(A)=\Inv_{\mathbb{K}}(A-S)=\Inv_{\mathbb{K}}(N).$$
 \item Notice that
$$Z_{\mathbb{K}}(A)=Z_{\mathbb{F}}(A)\cap Z_{\mathbb{F}}(S)=Z_{\mathbb{F}}(A),$$
and
$$Z_{\mathbb{K}}(A)=Z_{\mathbb{K}}(A-S)=Z_{\mathbb{K}}(N).$$
Therefore
 $$\Hinv_{\mathbb{F}}(A)=\Hinv_{\mathbb{K}}(A)=\Hinv_{\mathbb{K}}(A-S)=\Hinv_{\mathbb{K}}(N).$$
\end{enumerate}
\end{proof}

Next example illustrates the above lemma for the field of real numbers.

\begin{example}\label{exhyp}
 Let  $\mathbb{F}=\mathbb{R}$, $V=\mathbb{R}^{4}$ and
 $$A=\left(\begin{array}{cccc}0&1&0&0\\-1&0&0&0\\1&0&0&1\\ 0&1&-1&0 \end{array}\right).$$
The minimal polynomial of $A$ is $m_{A}=(x^{2}+1)^{2}$. The Jordan-Chevalley decomposition of $A$ is given by
  $$ S=\left(\begin{array}{cccc}0&1&0&0\\-1&0&0&0\\0&0&0&1\\ 0&0&-1&0 \end{array}\right), \quad
  N=\left(\begin{array}{cccc}0&0&0&0\\0&0&0&0\\1&0&0&0\\ 0&1&0&0 \end{array}\right),$$
and
$\mathbb{K}=\left\{ a_{0}I_{4}+a_{1}S,\,a_{i}\in\mathbb{R}\right\}$. Notice that the Jordan chains of $N$ are

$$e_{1} \rightarrow e_{3}\rightarrow 0,$$

$$e_{2} \rightarrow e_{4}\rightarrow 0.$$

Then,

 $$ \Inv_{\mathbb{R}}(A)=\Inv_{\mathbb{K}}(N)= \left\{0,\langle e_{3},e_{4}\rangle,V \right\}.$$

\medskip

The lattice of hyperinvariant subspaces is
$$\Hinv_{\mathbb{R}}(A)=\Hinv_{\mathbb{K}}(N)=\{0,\langle e_{3},e_{4} \rangle, V\}.$$

 \end{example}

\medskip

Concerning to characteristic subspaces, Shoda's theorem characterizes the existence of characteristic non hyperinvariant subspaces.

\begin{theorem}\cite{Shoda}\label{Shodate}
Let $V$ be a finite-dimensional vector space over the field $\mathbb{F} = GF(2)$ and let $f :V \rightarrow V$ be a
nilpotent linear operator. The following statements are equivalent:
\begin{enumerate}
\item There exists a characteristic subspace of $V$ which is not hyperinvariant.
\item For some numbers $r$ and $s$ with $s > r + 1$ the Jordan form of $f$ contains exactly one Jordan block of size $s$ and exactly one block of size $r$.
 \end{enumerate}
\end{theorem}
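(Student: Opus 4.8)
The plan is to prove both implications by examining the difference between the centralizer $Z_{\FF}(f)$ and its unit group $Z^{*}_{\FF}(f)$ over $\FF=GF(2)$, and by reducing the comparison of characteristic and hyperinvariant subspaces to the action of a small family of ``diagonal'' centralizer elements. The decisive elementary observation is that, for any subspace $W$, the set $\{B\in M_n(\FF):BW\subseteq W\}$ is an $\FF$-subspace (indeed a subalgebra) of $M_n(\FF)$ containing $I$. Hence $W$ is characteristic iff $Z^{*}_{\FF}(f)$ is contained in this set, $W$ is hyperinvariant iff all of $Z_{\FF}(f)$ is, and consequently $\vspan_{\FF}Z^{*}_{\FF}(f)=Z_{\FF}(f)$ would force $\Chinv_{\FF}(f)=\Hinv_{\FF}(f)$. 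The scalar trick $B=(B+\lambda I)-\lambda I$ shows that over any field with at least three elements the units always span the centralizer (pick $\lambda\neq 0$ avoiding the finitely many eigenvalues of $B$); this is exactly why only $\FF=GF(2)$ can produce characteristic non-hyperinvariant subspaces, and it isolates the phenomenon as a failure of the units to span $Z_{\FF}(f)$.

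Next I would make this concrete using the block form of the nilpotent centralizer. Writing $\mathcal Z=Z_{\FF}(f)$ with radical $J$, one has $\mathcal Z/J\cong\prod_d M_{m_d}(GF(2))$, where $m_d$ is the number of Jordan blocks of size $d$, and the units are precisely the elements mapping into $\prod_d GL_{m_d}(GF(2))$. Since $I+J\subseteq Z^{*}_{\FF}(f)$ always, spanning holds iff $\prod_d GL_{m_d}(GF(2))$ spans $\prod_d M_{m_d}(GF(2))$, and a short linear-functional argument (the map $x\mapsto x_i+x_j$ vanishes on every unit as soon as two factors equal $M_1$) shows this fails exactly when at least two distinct block sizes have multiplicity one. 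Because three or more distinct sizes cannot all be pairwise consecutive, the negation of statement (2) splits into: (A) at most one size of multiplicity one, where spanning holds and $\Chinv_{\FF}(f)=\Hinv_{\FF}(f)$ is immediate; and (B) exactly two multiplicity-one sizes, necessarily consecutive $r,r+1$. For (B) I would argue directly that, although spanning fails, the unipotent cross-homomorphism between the size-$(r+1)$ and size-$r$ chains (forced because the corresponding $I+N$ lies in $Z^{*}_{\FF}(f)$), combined with the subspace structure, forces any characteristic $W$ to be stable under the block projection, so again $\Chinv_{\FF}(f)=\Hinv_{\FF}(f)$.

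For sufficiency I would construct an explicit characteristic non-hyperinvariant subspace from the size-$s$ and size-$r$ chains when $s\geq r+2$. Denoting the chains by $a_1\leftarrow\cdots\leftarrow a_s$ and $b_1\leftarrow\cdots\leftarrow b_r$ with $fa_i=a_{i-1}$, the key point is that the cross-homomorphisms $s\to r$ forced by units shift a chain by the gap and therefore annihilate the chain elements of low height. This allows one to build a ``graph/staircase'' subspace $W$ containing a mixed generator such as $a_{r+1}+b_r$ (the model case being $\langle a_1,\,a_2+b_1\rangle$ for the partition $(3,1)$) which is stable under every generator $I+n$, $n\in J$, hence characteristic; meanwhile the diagonal projection onto the size-$s$ block, a genuine element of $Z_{\FF}(f)$ not lying in $\vspan_{\FF}Z^{*}_{\FF}(f)$, ejects the generator from $W$, so $W$ is not hyperinvariant.

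The main obstacle is the general construction and its verification in the sufficiency step: for a gap $s-r$ strictly between $2$ and $r$ the naive single-vector graph is moved by the forced cross-maps, so $W$ must be taken as a higher-dimensional staircase chosen so that every forced unipotent keeps it invariant, and this must be checked in the presence of arbitrarily many other Jordan blocks, each contributing further forced homomorphisms into and out of the two distinguished chains. Verifying characteristicity then reduces to careful bookkeeping of how $I+J$ acts on the staircase, and this combinatorial heart---matching the heights of the mixed generators to the gap so that all multiplicity-one couplings are avoided---is where the real work lies; the necessity sub-case (B) is the secondary difficulty.
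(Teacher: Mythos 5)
First, note that the paper does not prove this statement at all: it is quoted verbatim from Shoda's 1930 paper \cite{Shoda} and used as a black box, so there is no in-paper proof to compare against. Judged on its own, your framework is sound and standard: $\{B:BW\subseteq W\}$ is a subalgebra, so $W$ is characteristic iff it is invariant under $\vspan_{\FF}Z^{*}_{\FF}(f)$, and since $I+J\subseteq Z^{*}_{\FF}(f)$ the comparison with $Z_{\FF}(f)$ reduces to whether $\prod_d GL_{m_d}(GF(2))$ spans $\prod_d M_{m_d}(GF(2))$, which fails exactly when at least two block sizes have multiplicity one. Your case (A) is complete and correct. The first genuine gap is case (B) of necessity: when exactly two sizes of multiplicity one occur and they are consecutive ($r$ and $r+1$), the span of the units is still a proper subspace of the centralizer, so nothing formal forces $\Chinv=\Hinv$; the assertion that the forced cross-homomorphisms ``combined with the subspace structure'' make every characteristic $W$ stable under the block projection \emph{is} the content of this half of Shoda's theorem, and you give no mechanism for it. This is not a routine verification.

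The second gap is in sufficiency, and here the explicit candidate you offer is actually wrong, not merely unverified. Write the two distinguished chains as $a_1\leftarrow\cdots\leftarrow a_s$ and $b_1\leftarrow\cdots\leftarrow b_r$ with $fa_i=a_{i-1}$, and take the partition $(4,2)$, so $s=4$, $r=2$, gap $=2$ (a case your text claims is covered by the ``naive single-vector graph''). The $f$-closure of $a_{r+1}+b_r=a_3+b_2$ is $W=\langle a_3+b_2,\,a_2+b_1,\,a_1\rangle$, and the centralizer element $\varphi$ determined by $a_4\mapsto a_3$, $b_2\mapsto 0$ gives a unit $I+\varphi$ with $(I+\varphi)(a_3+b_2)=a_3+b_2+a_2\notin W$; so $W$ is not characteristic. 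What works is the larger subspace $\langle a_{r+1}+b_r\rangle+\bigl(\im f^{\,s-r-1}\cap\ker f^{\,r}\bigr)$ (for $(4,2)$ this is $\langle a_3+b_2,\,a_2,\,a_1,\,b_1\rangle$), i.e.\ the mixed generator must be padded with a hyperinvariant subspace that absorbs the images under \emph{all} the forced maps, including the cross-homomorphisms into and out of every other Jordan block permitted by condition (2). You correctly flag this verification, together with case (B), as ``where the real work lies'' --- but that is an admission that the two hard halves of the theorem are missing, so the proposal is an outline rather than a proof.
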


Astuti-Wimmer proved (\cite{Ast1}) that characteristic non hyperinvariant subspaces can only exist on the field $GF(2)$.

\begin{theorem}\cite{Ast1}\label{AstWim1}
Let $V$ be a finite dimensional vector space over a field $\mathbb{F} $ and let $f : V \rightarrow V$ be a linear operator. Assume that 
the minimal polynomial of $f$ splits over $\FF$. If $|\FF|>2$, then $\Chinv_{\FF}(f)=\Hinv_{\FF}(f)$.
\end{theorem}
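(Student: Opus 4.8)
The inclusion $\Hinv_{\mathbb{F}}(f)\subseteq\Chinv_{\mathbb{F}}(f)$ holds by definition, so the content is the reverse inclusion $\Chinv_{\mathbb{F}}(f)\subseteq\Hinv_{\mathbb{F}}(f)$. My plan is to reduce everything to a single statement about the centralizer algebra $\mathcal{A}:=Z_{\mathbb{F}}(f)$: that \emph{$\mathcal{A}$ is spanned, as an $\mathbb{F}$-vector space, by its group of units $\mathcal{A}^{*}=Z^{*}_{\mathbb{F}}(f)$}. Granting this, the theorem is immediate: given $Y\in\Chinv_{\mathbb{F}}(f)$ and any $B\in Z_{\mathbb{F}}(f)$, I write $B=\sum_{i}\mu_{i}U_{i}$ with $\mu_{i}\in\mathbb{F}$ and $U_{i}\in Z^{*}_{\mathbb{F}}(f)$; then for $y\in Y$ one has $By=\sum_{i}\mu_{i}(U_{i}y)$, where each $U_{i}y\in Y$ because $Y$ is characteristic, so $By\in Y$ since $Y$ is a subspace. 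Hence $BY\subseteq Y$ for every $B\in Z_{\mathbb{F}}(f)$, i.e. $Y\in\Hinv_{\mathbb{F}}(f)$.

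To prove the spanning statement I would split off the Jacobson radical $J:=\operatorname{rad}(\mathcal{A})$. For any $j\in J$ the element $1+j$ is a unit, so $j=(1+j)-1$ is a difference of two units; thus $J\subseteq\operatorname{span}_{\mathbb{F}}(\mathcal{A}^{*})$ \emph{with no hypothesis on the field}. Moreover an element of $\mathcal{A}$ is a unit precisely when its image in $\overline{\mathcal{A}}:=\mathcal{A}/J$ is a unit, so any relation expressing an element of $\overline{\mathcal{A}}$ as an $\mathbb{F}$-combination of units lifts to $\mathcal{A}$ modulo $J$. Therefore it suffices to show that the semisimple quotient $\overline{\mathcal{A}}$ is spanned by its units.

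Here the splitting hypothesis enters. Since $m_{f}$ splits over $\mathbb{F}$, decomposing into generalized eigenspaces and subtracting on each the (central, scalar) eigenvalue shows that $\mathcal{A}$ is a product of centralizers of nilpotent operators over $\mathbb{F}$; consequently $\overline{\mathcal{A}}\cong\prod_{j}M_{m_{j}}(\mathbb{F})$ is a product of \emph{full} matrix algebras over $\mathbb{F}$ itself, the $m_{j}$ being the multiplicities of the distinct Jordan block sizes. For each factor I would check that $\{u-1:\ u\text{ a unit}\}$ spans the factor: for $M_{m}(\mathbb{F})$ with $m\geq 2$ this holds over every field, since the unipotent units $I+E_{ij}$ yield all off-diagonal matrix units and a few further invertible matrices yield the diagonal ones; for a $1\times1$ factor $\mathbb{F}$ one needs a scalar $a\neq 0,1$, i.e. exactly $|\mathbb{F}|>2$. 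With this, multiplying the identity of each factor by units isolates each coordinate $(0,\dots,b_{k},\dots,0)$ inside $\operatorname{span}_{\mathbb{F}}(\overline{\mathcal{A}}^{*})$, and summing over the factors gives $\overline{\mathcal{A}}=\operatorname{span}_{\mathbb{F}}(\overline{\mathcal{A}}^{*})$.

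I expect the decisive step to be the \emph{separation of the factors} of $\overline{\mathcal{A}}$: a single simple factor is spanned by its units over any field, but disentangling a product forces the use of units differing from $1$ in a single coordinate, which is impossible for a $1\times1$ factor over $GF(2)$, where the only unit is $1$. This is exactly the boundary recorded by the theorem, and it is what the splitting hypothesis buys us: it guarantees that every factor of $\overline{\mathcal{A}}$ is a matrix algebra over $\mathbb{F}$ with no residual division rings, so that the single condition $|\mathbb{F}|>2$ controls every factor simultaneously.
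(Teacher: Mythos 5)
Your argument is correct, but note that the paper does not prove this statement at all: Theorem~\ref{AstWim1} is imported verbatim from Astuti--Wimmer \cite{Ast1}, so there is no internal proof to match. Compared with the published proof in \cite{Ast1}, which works concretely with Jordan generators and the Fillmore--Herrero--Longstaff description of hyperinvariant subspaces as sums of $\ker f^{r}\cap \im f^{s}$, your route is more structural: you reduce everything to the single ring-theoretic claim that $Z_{\mathbb{F}}(f)$ is spanned by its unit group, prove it by passing to the semisimple quotient modulo the Jacobson radical (where units lift and $1+j$ handles the radical over any field), and use the splitting hypothesis only to identify that quotient as $\prod_j M_{m_j}(\mathbb{F})$, a product of matrix algebras over $\mathbb{F}$ itself with no residual division rings. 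The accounting is right: each factor $M_{m}(\mathbb{F})$ with $m\geq 2$ has $\{u-1: u \text{ a unit}\}$ spanning it over every field (the $I+E_{ij}$ give the off-diagonal units and a unit such as $E_{11}+E_{12}+E_{21}+E_{33}+\cdots+E_{mm}$ recovers the diagonal ones), while a $1\times 1$ factor needs a scalar $\neq 0,1$; and the separation of factors via units equal to $1$ in all but one coordinate is exactly where $|\mathbb{F}|>2$ enters, matching the known $GF(2)$ counterexamples of Shoda. What your approach buys is a cleaner conceptual explanation of the $|\mathbb{F}|>2$ boundary and a mechanism (spanning by units) that transparently implies characteristic $\Rightarrow$ hyperinvariant; what it costs is reliance on the standard but nontrivial structure theorem for $Z_{\mathbb{K}}(N)/\operatorname{rad}$ of a nilpotent operator, which you assert rather than prove. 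Citing a reference for that fact would make the proof self-contained.
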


\section{Reduction to the nilpotent case for characteristic subspaces}\label{The characteristic case}

In this section we focus on the study of the lattice of characteristic subspaces of an endomorphism over a field when the irreducible 
factors of the minimal polynomial are separable.

First of all we see that the general case can be reduced to the especific case of endomorphisms having minimal polynomials with an 
unique irreducible factor, as in the lattices of invariant of hyperinvariant subspaces.

\begin{lemma}

Let $A$ and $B$ be endomorphisms on finite dimensional vector spaces $V$ and $W$ respectively, over a field $\FF$. The following properties 
are equivalent:
\begin{enumerate}
\item The minimum polynomials of $A$ and $B$ are relatively prime.
\item
$\Chinv_{\mathbb{F}}(A \oplus B) = \Chinv_{\mathbb{F}}(A) \oplus \Chinv_{\mathbb{F}}(B)$.

 \end{enumerate}

\end{lemma}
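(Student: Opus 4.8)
The plan is to prove the two implications separately, using as a common engine the classical Sylvester dichotomy: the equation $AX=XB$ has only the trivial solution $X=0$ exactly when $m_A$ and $m_B$ are coprime. In module terms, endowing $V$ and $W$ with the $\FF[x]$-module structures $x\cdot v=Av$, $x\cdot w=Bw$, this says that $\operatorname{Hom}_{\FF[x]}(W,V)=0$ iff $\gcd(m_A,m_B)=1$, the ``if'' part following from $m_A(B)$ being invertible and the ``only if'' part from the existence of common cyclic summands $\FF[x]/(p^a)$, $\FF[x]/(p^b)$ with $a,b\ge1$ whenever $p$ is a shared irreducible factor. The conceptual point that distinguishes this lemma from the hyperinvariant case (Proposition \ref{hinvdecomp}) is that characteristic subspaces are tested against the multiplicative group $Z^*_\FF$ rather than the whole algebra $Z_\FF$, so every step must be arranged to stay within invertible centralizing matrices.

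For $(1)\Rightarrow(2)$ I would first pin down the shape of the centralizer. Writing a general $C\in Z_\FF(A\oplus B)$ in blocks $C=\left(\begin{smallmatrix}C_{11}&C_{12}\\ C_{21}&C_{22}\end{smallmatrix}\right)$, the commutation relation with $A\oplus B$ forces $AC_{12}=C_{12}B$ and $BC_{21}=C_{21}A$, whence $C_{12}=C_{21}=0$ by the Sylvester dichotomy; since a block-diagonal matrix is invertible iff both diagonal blocks are, one gets $Z^*_\FF(A\oplus B)=\{\diag(P,Q):P\in Z^*_\FF(A),\,Q\in Z^*_\FF(B)\}$. The inclusion $\Chinv_\FF(A)\oplus\Chinv_\FF(B)\subseteq\Chinv_\FF(A\oplus B)$ is then immediate, since $\diag(P,Q)(U_1\oplus U_2)=PU_1\oplus QU_2\subseteq U_1\oplus U_2$. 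For the reverse inclusion, take $U\in\Chinv_\FF(A\oplus B)\subseteq\Inv_\FF(A\oplus B)$ and use Proposition \ref{invdecomp} (available by coprimality) to split $U=U_1\oplus U_2$ with $U_1,U_2$ invariant; applying the characteristic condition to the invertible centralizing matrices $\diag(P,I_W)$ and $\diag(I_V,Q)$ yields $PU_1\subseteq U_1$ and $QU_2\subseteq U_2$ for all $P\in Z^*_\FF(A)$, $Q\in Z^*_\FF(B)$, i.e. $U_1\in\Chinv_\FF(A)$ and $U_2\in\Chinv_\FF(B)$.

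For $(2)\Rightarrow(1)$ I would argue the contrapositive by exhibiting an explicit failure. Assume $m_A$ and $m_B$ share an irreducible factor $p$; then $\operatorname{Hom}_{\FF[x]}(W,V)\ne0$, so there is a nonzero $X$ with $AX=XB$. The unipotent matrix $C=\left(\begin{smallmatrix}I_V&X\\0&I_W\end{smallmatrix}\right)$ is invertible and commutes with $A\oplus B$ (the commutation relation collapses to $AX=XB$), so $C\in Z^*_\FF(A\oplus B)$. Now $\{0\}\oplus W$ lies in $\Chinv_\FF(A)\oplus\Chinv_\FF(B)$, yet $C(\{0\}\oplus W)=\{(Xw,w):w\in W\}\not\subseteq\{0\}\oplus W$ because $X\ne0$; hence $\{0\}\oplus W\notin\Chinv_\FF(A\oplus B)$ and the two lattices cannot coincide.

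I expect the main obstacle to be organizational rather than computational: one must respect that $Z^*_\FF$ is a group and feed the characteristic condition only invertible test matrices. This is handled in the first direction by the diagonal matrices $\diag(P,I_W)$, which peel off one summand at a time, and in the second by the unipotent $C$, whose invertibility is automatic. The one genuinely substantive input is the Sylvester dichotomy, i.e. the existence of a nonzero intertwiner $W\to V$ precisely under a shared irreducible factor; I would justify it through the cyclic-module description above rather than by a direct matrix computation.
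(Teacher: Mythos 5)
Your proof is correct and follows essentially the same route as the paper: coprimality forces the centralizer of $A\oplus B$ to be block-diagonal (the paper cites Suprunenko--Tyshkevich where you invoke the Sylvester dichotomy), and the converse is witnessed in both cases by a unipotent matrix $\left(\begin{smallmatrix}I&X\\0&I\end{smallmatrix}\right)$ built from a nonzero intertwiner, which the paper exhibits via companion matrices and you via common cyclic $\FF[x]$-summands. Your treatment of the inclusion $\Chinv_\FF(A\oplus B)\subseteq\Chinv_\FF(A)\oplus\Chinv_\FF(B)$ via Proposition~\ref{invdecomp} and the test matrices $\diag(P,I_W)$, $\diag(I_V,Q)$ is in fact more careful than the paper's, which dismisses this inclusion as evident.
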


\begin{proof} 

First assume that (1) is true. It is evident that $\Chinv(A\oplus B) \subseteq \Chinv(A)\oplus\Chinv(B)$ holds even for an arbitrary
polynomials. 
 
As it is stated in \cite{Supru68}, $Z(A\oplus B)=Z(A)\oplus Z(B)$, therefore every commuting automorphism of $A\oplus B$ is of the 
form $X=\left(\begin{array}{cc}X_{1}&0\\
                                   0&X_{2}\end{array}\right)$ with $X_{1}\in Z^{\ast}(A)$ and $X_{2}\in Z^{\ast}(B)$.

Taking $W=W_{1}\oplus W_{2} \in\Chinv(A)\oplus \Chinv(B)$, clearly $W\in\Chinv(A\oplus B)$ follows.

\medskip

Assume now that (2) is true. 

As $\Chinv(A\oplus B)=\Chinv(A)\oplus\Chinv(B)$, it is satisfied that $V \oplus 0\in \Chinv(A\oplus B)$. Notice that if  $X$ is a matrix such that $XA=BX$, then $\left(\begin{array}{cc}I&0\\
                                   X&I\end{array}\right)\in Z^{\ast}(A\oplus B)$. Therefore, $V\oplus 0$ must be invariant for this matrix and this implies $X=0$.

If $\gcd(m_{A}, m_{B})=q$, $\deg(q)>0$, then $m_{A}=qf$ and $m_{B}=qh$ for some polynomials $f,h$, and w.l.o.g. we can suppose $\gcd(q,f)=1$. With respect to an appropriate basis, we can write
$A=\left(\begin{array}{cc}C_{q}&0\\
                                   0&F\end{array}\right)$ and $B=\left(\begin{array}{cc}C_{q}&H\\
                                   0&G\end{array}\right)$ where $C_{q}$ is the companion matrix of $q$ and for some matrices $F,H$ and $G$.
                                   
                                   Let $U\in Z^{\ast}(C_{q})$, then 
                                   $X=\left(\begin{array}{cc}U&0\\
                                   0&0\end{array}\right)\neq 0$ satisfies $XA=BX$, which is clearly a contradiction.

\end{proof} 

\begin{corollary}
 
Let $A\in M_{n}(\mathbb{F})$ and $m_{A}=p_{1}^{k_{1}}\ldots p_{r}^{k_{r}}$ with $p_{i}$ distinct irreducible polynomials. Then,
\begin{equation} \Chinv(A)=\Chinv(A_{1})\oplus\dots\oplus\Chinv(A_{r}),
\end{equation}
where $A_{i}\in M_{n_{i}}(\mathbb{F})$ is the restriction of $A\in M_{n}(\mathbb{F})$ to $V_{i}=\ker((p_{i}(A))^{k_{i}})$, $i=1, 2, \ldots, r$, $(V=V_{1}\oplus\dots\oplus V_{r})$.
\end{corollary}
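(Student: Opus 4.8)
The plan is to reduce this multi-factor statement to the two-factor Lemma just proved, by induction on the number $r$ of distinct irreducible factors of $m_A$. The case $r=1$ is trivial, so I assume $r\geq 2$ and that the claim holds for fewer factors.

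First I would split off the first primary component. With respect to the decomposition $V=V_1\oplus(V_2\oplus\cdots\oplus V_r)$, the operator decomposes as $A=A_1\oplus A'$, where $A'=A|(V_2\oplus\cdots\oplus V_r)$. The minimal polynomial of $A_1$ is $p_1^{k_1}$, while the minimal polynomial of $A'$ equals $\mathrm{lcm}(p_2^{k_2},\dots,p_r^{k_r})=p_2^{k_2}\cdots p_r^{k_r}$, because the $p_i$ are distinct irreducible polynomials. Hence $m_{A_1}$ and $m_{A'}$ are relatively prime, which is exactly condition (1) of the Lemma applied to the pair $(A_1,A')$. Condition (2) then gives $\Chinv(A)=\Chinv(A_1)\oplus\Chinv(A')$.

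To finish, I would invoke the induction hypothesis on $A'$, which has the $r-1$ distinct irreducible factors $p_2,\dots,p_r$ and the same primary decomposition of its domain into $V_2,\dots,V_r$. This yields $\Chinv(A')=\Chinv(A_2)\oplus\cdots\oplus\Chinv(A_r)$, and substituting into the previous identity completes the argument.

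There is no genuine obstacle here: all the real content lives in the two-factor Lemma. The only point requiring minor care is verifying that $m_{A'}=p_2^{k_2}\cdots p_r^{k_r}$, equivalently that $m_{A_1}$ is coprime to $m_{A'}$; this is immediate from the distinctness and irreducibility of the $p_i$, and it is precisely what licenses the use of the Lemma at the inductive step.
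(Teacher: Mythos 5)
Your proof is correct and follows exactly the route the paper intends: the corollary is stated without proof as an immediate consequence of the preceding Lemma, and your induction on $r$, splitting $V=V_1\oplus(V_2\oplus\cdots\oplus V_r)$ and checking that $m_{A_1}=p_1^{k_1}$ is coprime to $m_{A'}=p_2^{k_2}\cdots p_r^{k_r}$, is precisely the standard way to make that consequence explicit. No gaps.
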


In the next lemma we see that the lattice of characteristic subspaces of $A$ can also be determined on a different field for a nilpotent matrix, using Jordan-Chevalley decomposition.

\begin{lemma}\label{charlema}
Let $A\in M_{n}(\mathbb{F})$. Assume that $m_A$ is $p-$primary ($m_{A}=p^{r}$) with $p$ separable and $\deg (p)=s$. Let $A=S+N$ be 
the Jordan-Chevalley decomposition of $A$. Let $\mathbb{K}=\{a_{0}I_{n}+a_{1}S+\ldots+a_{s-1}S^{s-1}\,|a_{i}\in \mathbb{F}\}$.
Then,
\begin{enumerate}

  \item $\Chinv_{\mathbb{F}}(A)=\Chinv_{\mathbb{K}}(A)=\Chinv_{\mathbb{K}}(N).$
\end{enumerate}
\end{lemma}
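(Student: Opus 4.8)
The plan is to imitate the strategy already used for $\Inv$ and $\Hinv$ in Theorem~\ref{inv-hyperinv}; the only genuinely new ingredient is that characteristic subspaces are governed by the \emph{group of units} $Z^{*}$ of the centralizer rather than by the full centralizer $Z$. So I would first record that the three families of subspaces in question are selected from one and the same ambient collection, then show that the relevant unit groups coincide, and finally obtain both equalities by matching the defining conditions.

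For the ambient sets, I would invoke Theorem~\ref{inv-hyperinv}(1), which gives $\Inv_{\mathbb{F}}(A)=\Inv_{\mathbb{K}}(A)=\Inv_{\mathbb{K}}(N)$. The point is that $A$ is $\mathbb{K}$-linear (it commutes with $S$, hence with every element of $\mathbb{K}=\mathbb{F}[S]$; recall $S$ is a polynomial in $A$ by Lemma~\ref{sn}), so that each $A$-invariant $\mathbb{F}$-subspace is automatically a $\mathbb{K}$-subspace. Thus the three invariant lattices are literally the same collection of subsets of $V$. Since every characteristic subspace is in particular invariant, the families $\Chinv_{\mathbb{F}}(A)$, $\Chinv_{\mathbb{K}}(A)$, $\Chinv_{\mathbb{K}}(N)$ are all carved out of this common ambient set, and it remains only to compare the selection conditions.

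Next I would prove the identity of unit groups $Z^{*}_{\mathbb{F}}(A)=Z^{*}_{\mathbb{K}}(A)=Z^{*}_{\mathbb{K}}(N)$. For the first equality I use, exactly as in the proof of Theorem~\ref{inv-hyperinv}(2), that $Z_{\mathbb{F}}(A)\subseteq Z_{\mathbb{F}}(S)$ (because $S$ is a polynomial in $A$), whence $Z_{\mathbb{K}}(A)=Z_{\mathbb{F}}(S)\cap Z_{\mathbb{F}}(A)=Z_{\mathbb{F}}(A)$ as sets of maps on $V$; passing to units is harmless, since invertibility of a linear map is field-independent (a map is bijective iff its kernel is trivial, a condition that does not see whether $V$ is regarded over $\mathbb{F}$ or over $\mathbb{K}$), and the inverse of an element of $Z_{\mathbb{F}}(A)$ again lies in $Z_{\mathbb{F}}(A)=Z_{\mathbb{K}}(A)$. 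For the second equality I use that $S\in\mathbb{K}$ acts as a $\mathbb{K}$-scalar, so every $\mathbb{K}$-linear endomorphism commutes with $S$; hence commuting with $A=S+N$ is equivalent to commuting with $N$, giving $Z_{\mathbb{K}}(A)=Z_{\mathbb{K}}(N)$ and therefore $Z^{*}_{\mathbb{K}}(A)=Z^{*}_{\mathbb{K}}(N)$.

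Finally I would assemble the two equalities. A subspace $Y$ lies in $\Chinv_{\mathbb{F}}(A)$ precisely when $Y\in\Inv_{\mathbb{F}}(A)$ and $BY\subseteq Y$ for every $B\in Z^{*}_{\mathbb{F}}(A)$; by the two preceding paragraphs both the invariance requirement and the family of test automorphisms are unchanged upon replacing $\mathbb{F}$ by $\mathbb{K}$, which yields $\Chinv_{\mathbb{F}}(A)=\Chinv_{\mathbb{K}}(A)$. Repeating the argument with $N$ in place of $A$, and using $\Inv_{\mathbb{K}}(A)=\Inv_{\mathbb{K}}(N)$ together with $Z^{*}_{\mathbb{K}}(A)=Z^{*}_{\mathbb{K}}(N)$, gives $\Chinv_{\mathbb{K}}(A)=\Chinv_{\mathbb{K}}(N)$. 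The one step that requires genuine care, and which I regard as the main (if modest) obstacle, is precisely the claim that the \emph{unit} group is insensitive to the change of scalar field from $\mathbb{F}$ to $\mathbb{K}$: one must verify both that invertibility does not depend on the field over which the determinant is computed and that the inverse remains inside the common centralizer, so that $Z^{*}_{\mathbb{F}}(A)$ and $Z^{*}_{\mathbb{K}}(A)$ really are the same set of maps on $V$.
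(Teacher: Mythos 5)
Your proposal is correct and follows essentially the same route as the paper: identify $Z^{*}_{\mathbb{F}}(A)=Z^{*}_{\mathbb{K}}(A)=Z^{*}_{\mathbb{K}}(N)$ (using that $S$ is a polynomial in $A$ and that $S$ acts as a $\mathbb{K}$-scalar) and then transfer the defining conditions, exactly as the paper does by analogy with Theorem~\ref{inv-hyperinv}. Your version is in fact slightly more careful than the paper's, since you explicitly justify that passing to units is insensitive to the change of scalars and that the invariance requirement also transfers.
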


\begin{proof}

Similarly to the proof of Theorem \ref{inv-hyperinv}, 
$$Z_{\mathbb{K}}^*(A)=Z_{\mathbb{F}}^*(A)\cap Z_{\mathbb{F}}^*(S)=Z_{\mathbb{F}}^*(A),$$
and
$$Z_{\mathbb{K}}^*(A)=Z_{\mathbb{K}}^*(A-S)=Z_{\mathbb{K}}^*(N).$$
Therefore
 $$\Chinv_{\mathbb{F}}(A)=\Chinv_{\mathbb{K}}(A)=\Chinv_{\mathbb{K}}(A-S)=\Chinv_{\mathbb{K}}(N).$$
\end{proof}

Finally we obtain the conclusion that characteristic non hyperinvariant subspaces can only exist over Jordan blocks.

\begin{theorem}
Let $A$ be an endomorphism on a finite dimensional space $V$ over a field $\mathbb{F}$. Assume that $m_A$ is $p-$primary $(m_{A}=p^{r})$ with $p$ separable. 
If $\Chinv_{\mathbb{F}}(A)\setminus \Hinv_{\mathbb{F}}(A)\neq \{\emptyset\}$, then $\deg(p)=1$ and $\mathbb{F}=GF(2)$.
\end{theorem}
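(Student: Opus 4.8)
The plan is to combine the reduction machinery already established with the two structural theorems about characteristic subspaces, passing from the field $\mathbb{F}$ to the extension field $\mathbb{K}$. First I would invoke Lemma~\ref{charlema} and Theorem~\ref{inv-hyperinv}: since $m_A=p^r$ with $p$ separable, the Jordan--Chevalley decomposition $A=S+N$ yields the field $\mathbb{K}=\{a_0I_n+a_1S+\cdots+a_{s-1}S^{s-1}\mid a_i\in\mathbb{F}\}$, over which $N$ is nilpotent and we have the lattice identifications $\Chinv_{\mathbb{F}}(A)=\Chinv_{\mathbb{K}}(N)$ and $\Hinv_{\mathbb{F}}(A)=\Hinv_{\mathbb{K}}(N)$. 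Consequently the hypothesis $\Chinv_{\mathbb{F}}(A)\setminus\Hinv_{\mathbb{F}}(A)\neq\{\emptyset\}$ transfers verbatim into a statement about the nilpotent operator $N$ over $\mathbb{K}$: namely $\Chinv_{\mathbb{K}}(N)\neq\Hinv_{\mathbb{K}}(N)$.

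Next I would apply Theorem~\ref{AstWim1} (Astuti--Wimmer) to $N$ over $\mathbb{K}$. Since $N$ is nilpotent, its minimal polynomial is a power of $x$, which certainly splits over $\mathbb{K}$, so the theorem applies. The contrapositive of Theorem~\ref{AstWim1} states that if $\Chinv_{\mathbb{K}}(N)\neq\Hinv_{\mathbb{K}}(N)$, then $|\mathbb{K}|\leq 2$, i.e.\ $\mathbb{K}=GF(2)$. By part~(1) of Lemma~\ref{mainlema}, $\mathbb{K}$ is isomorphic to $\mathbb{F}(\alpha)$ where $\alpha$ is a root of $p$; its dimension over $\mathbb{F}$ equals $s=\deg(p)$, so $|\mathbb{K}|=|\mathbb{F}|^{s}$. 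From $|\mathbb{K}|=2$ I would conclude that $|\mathbb{F}|=2$ and $s=1$, that is, $\mathbb{F}=GF(2)$ and $\deg(p)=1$, which is exactly the desired conclusion.

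The only point requiring a little care, and what I expect to be the main (though minor) obstacle, is the passage through the centralizer of \emph{automorphisms} in Lemma~\ref{charlema}: one must be sure that the identity $Z^*_{\mathbb{K}}(A)=Z^*_{\mathbb{F}}(A)$ holds at the level of invertible commuting maps, not merely for the full algebra. This is because the definition of characteristic subspace quantifies over $Z^*$ rather than $Z$, so the transfer of the characteristic property between $\mathbb{F}$ and $\mathbb{K}$ hinges on the multiplicative group of commuting automorphisms being the same over both fields. Since every element of $\mathbb{K}$ is a polynomial in $S$ and hence in $A$, commuting with $A$ over $\mathbb{F}$ already forces commuting with every element of $\mathbb{K}$, so a matrix lies in $Z_{\mathbb{F}}(A)$ if and only if it lies in $Z_{\mathbb{K}}(A)$; intersecting with the invertible maps gives the required equality of $Z^*$, and the argument of Lemma~\ref{charlema} goes through. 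Once this is in hand, the theorem follows immediately by stringing together the lattice identification, the contrapositive of Astuti--Wimmer, and the degree count $|\mathbb{K}|=|\mathbb{F}|^{\deg(p)}$.
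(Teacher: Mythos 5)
Your proposal is correct and follows essentially the same route as the paper: reduce to the nilpotent part $N$ over $\mathbb{K}$ via Lemma~\ref{charlema} and Theorem~\ref{inv-hyperinv}, then apply Theorem~\ref{AstWim1} to force $|\mathbb{K}|\leq 2$ and hence $\deg(p)=1$ and $\mathbb{F}=GF(2)$. If anything, your direct contrapositive phrasing with the count $|\mathbb{K}|=|\mathbb{F}|^{\deg(p)}$ makes the two conclusions slightly more explicit than the paper's contradiction argument, which only treats the case $\deg(p)>1$ explicitly.
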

\begin{proof}
Let us assume that $\Chinv_{\mathbb{F}}(A)\setminus \Hinv_{\mathbb{F}}(A)\neq \{\emptyset\}$ and $\deg(p)>1$. As $p$ is separable, let $A=N+S$ be  the Jordan-Chevalley decomposition of $A$. By 
Lemma \ref{charlema} we know that $\Chinv_{\mathbb{F}}(A)=\Chinv_{\mathbb{K}}(N)$ with $\mathbb{K}$ defined as in Lemma \ref{charlema}. By assumption $\deg(p)>1$, which implies that $|\mathbb{K}|>2$, and 
by \cite[Theorem 3.4]{Ast1}, $\Chinv_{\mathbb{K}}(N)=\Hinv_{\mathbb{K}}(N)$. Finally, by Theorem \ref{inv-hyperinv}, $\Hinv_{\mathbb{K}}(N)=\Hinv_{\mathbb{F}}(A)$. Therefore, 
$\Chinv_{\mathbb{F}}(A)=\Hinv_{\mathbb{F}}(A)$, which is a contradiction.
 \end{proof}

We see that for the existence of characteristic non-hyperinvariant subspaces it is necessary that $\deg(p)=1$ and $\mathbb{F}=GF(2)$. If we take into account the Shoda condition, we obtain a necessary and sufficient condition.
 
\begin{corollary}
Let $A$ be an endomorphism on a finite dimensional space $V$ over a field $\mathbb{F}$. Assume that $m_A$ is $p-$primary $(m_{A}=p^{r})$ with $p$ separable. Then, the next two statement are equivalent:
 \begin{enumerate}
  \item $\Chinv_{\mathbb{F}}(A) \backslash  \Hinv_{\mathbb{F}}(A) \neq \{\emptyset\}$.
  \item  $\mathbb{F}= GF(2)$, $\deg(p)=1$ and the Shoda condition is satisfied.
  \end{enumerate}

\end{corollary}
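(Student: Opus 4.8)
The plan is to derive the equivalence by combining the preceding theorem (which already supplies the conditions $\FF = GF(2)$ and $\deg(p)=1$) with Shoda's Theorem (Theorem~\ref{Shodate}), using Lemma~\ref{charlema} to transfer the whole question to the nilpotent part of the Jordan--Chevalley decomposition. The guiding observation is that the degenerate case $\deg(p)=1$ collapses the auxiliary field $\KK$ down to $\FF$ itself, so that the lattice reductions of Section~\ref{The characteristic case} land us squarely in the nilpotent setting over $GF(2)$ where Shoda's criterion is stated.

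For the implication $(1)\Rightarrow(2)$, I would start from the hypothesis $\Chinv_{\FF}(A)\setminus\Hinv_{\FF}(A)\neq\{\emptyset\}$. The preceding theorem immediately yields $\FF = GF(2)$ and $\deg(p)=1$, so it remains only to establish the Shoda condition. Since $\deg(p)=1$ forces $s=1$, the field $\KK$ of Lemma~\ref{charlema} reduces to $\KK = \{a_0 I_n \mid a_0\in\FF\}\cong\FF$, and writing $p = x-\lambda$ the semisimple part is $S=\lambda I_n$, whence $N = A - S = A - \lambda I_n$. By Lemma~\ref{charlema} and Theorem~\ref{inv-hyperinv} one then has $\Chinv_{\FF}(A)=\Chinv_{\FF}(N)$ and $\Hinv_{\FF}(A)=\Hinv_{\FF}(N)$, so the hypothesis becomes $\Chinv_{\FF}(N)\setminus\Hinv_{\FF}(N)\neq\{\emptyset\}$ for the nilpotent operator $N$ over $GF(2)$. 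Shoda's Theorem now applies and gives that the Jordan form of $N$ satisfies the Shoda condition; since $N=A-\lambda I_n$ differs from $A$ only by a scalar, the two operators share the same multiset of Jordan block sizes, and hence the Shoda condition holds for $A$ as well.

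For the converse $(2)\Rightarrow(1)$, I would run the same reduction in reverse. Assuming $\FF = GF(2)$, $\deg(p)=1$ and the Shoda condition, the identifications $\KK\cong\FF$ and $N = A-\lambda I_n$ hold as before, and the block sizes of $N$ coincide with those of $A$, so the Shoda condition is met by the nilpotent $N$. Shoda's Theorem then produces a characteristic non-hyperinvariant subspace for $N$ over $GF(2)$, that is, $\Chinv_{\FF}(N)\setminus\Hinv_{\FF}(N)\neq\{\emptyset\}$, and the lattice identifications of Lemma~\ref{charlema} and Theorem~\ref{inv-hyperinv} transport this back to $A$, yielding $(1)$.

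The only step demanding genuine care is this reduction itself: one must verify that, precisely when $\deg(p)=1$, the auxiliary field $\KK$ degenerates to $\FF$, and that subtracting the scalar $\lambda I_n$ preserves both the characteristic and hyperinvariant lattices (via Lemma~\ref{charlema} and Theorem~\ref{inv-hyperinv}) \emph{and} the Jordan block structure on which Shoda's criterion is phrased. Once this transfer is justified, Shoda's Theorem may be quoted verbatim and the remainder is routine bookkeeping.
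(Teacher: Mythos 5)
Your proposal is correct and follows essentially the route the paper intends: the corollary is stated as an immediate combination of the preceding theorem (which forces $\mathbb{F}=GF(2)$ and $\deg(p)=1$) with Shoda's Theorem~\ref{Shodate}, transported through the reduction $\Chinv_{\mathbb{F}}(A)=\Chinv_{\mathbb{K}}(N)$ and $\Hinv_{\mathbb{F}}(A)=\Hinv_{\mathbb{K}}(N)$ of Lemma~\ref{charlema} and Theorem~\ref{inv-hyperinv}, with $\mathbb{K}=\mathbb{F}$ and $N=A-\lambda I_n$ when $\deg(p)=1$. Your explicit check that passing from $A$ to $N$ preserves the Jordan block sizes on which the Shoda condition is phrased is exactly the bookkeeping the paper leaves implicit.
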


 \medskip

We show next how the lattice of characteristic subspaces can be obtained in two examples. In the first 
one $\Hinv_{\mathbb{F}}(A) = \Chinv_{\mathbb{F}}(A)$, and in the second one 
$\Chinv_{\mathbb{F}}(A)\not\subseteq\Hinv_{\mathbb{F}}(A)$ for some matrices $A$.
\begin{example}\label{exchar}
 Let  $\mathbb{F}=GF(2)$ and
 $$A=\left(\begin{array}{cccccc|cc}0&1&0&0&0&0&0&0\\1&1&0&0&0&0&0&0\\1&0&0&1&0&0&0&0\\0&1&1&1&0&0&0&0\\0&0&1&0&0&1&0&0\\0&0&0&1&1&1&0&0
 \\
\hline
 0&0&0&0&0&0&0&1 \\0&0&0&0&0&0&1&1 \end{array}\right).$$
The minimal polynomial of $A$ is $p$-primary,  $m_{A}=(x^{2}+x+1)^{3}$, $p=x^{2}+x+1$ separable, and the Jordan-Chevalley decomposition of $A$ is 
  $$ S=\left(\begin{array}{cccccc|cc}0&1&0&0&0&0&0&0\\1&1&0&0&0&0&0&0\\0&0&0&1&0&0&0&0\\0&0&1&1&0&0&0&0\\0&0&0&0&0&1&0&0\\0&0&0&0&1&1&0&0
 \\
\hline
  0&0&0&0&0&0&0&1 \\0&0&0&0&0&0&1&1 \end{array}\right), \quad
  N=\left(\begin{array}{cccccc|cc}0&0&0&0&0&0&0&0\\0&0&0&0&0&0&0&0\\1&0&0&0&0&0&0&0\\0&1&0&0&0&0&0&0\\0&0&1&0&0&0&0&0\\0&0&0&1&0&0&0&0
 \\
\hline
  0&0&0&0&0&0&0&0 \\0&0&0&0&0&0&0&0 \end{array}\right).$$
Let $\mathbb{K}=\left\{ a_{0}I_{8}+a_{1}S,\,a_{i}\in GF(2)\right\}$.
 \medskip

\noindent
The Segre characteristic of $N$ is $(3,3,1,1)$ with Jordan chains
 
 $e_{1} \rightarrow e_{3}\rightarrow e_{5} \rightarrow 0$

 $e_{2} \rightarrow e_{4}\rightarrow e_{6} \rightarrow 0$

 $e_{7} \rightarrow 0$

 $e_{8} \rightarrow 0$

\noindent
The characteristic and hyperinvariant  subspaces of $N$ are (see \cite{MMP13}):
$$\Hinv_{\mathbb{K}}(N)=\Chinv_{\mathbb{K}}(N)=
\left\{0,\langle e_{3},e_{5},e_{4},e_{6},e_{7},e_{8}\rangle,\langle e_{3},e_{5},e_{4},e_{6}\rangle,\langle e_{5},e_{6},e_{7},e_{8}\rangle,\langle e_{5},e_{6}\rangle,V \right\}.$$
  \end{example}
 \medskip

 \begin{example}\label{exchar2}
 Let  $\mathbb{F}=GF(2)$ and
 $$A=\left(\begin{array}{cccc}1&0&0&0\\1&1&0&0\\0&1&1&0\\0&0&0&1\\\end{array}\right).$$
 The minimal polynomial of $A$ is $m_{A}=(x+1)^{3}$, $p=x+1$ trivially splits over $\mathbb{F}$ and
  $$ S=\left(\begin{array}{cccc}1&0&0&0\\0&1&0&0\\0&0&1&0\\0&0&0&1\\\end{array}\right), \quad
  N=\left(\begin{array}{cccc}0&0&0&0\\1&0&0&0\\0&1&0&0\\0&0&0&0\\\end{array}\right)$$
Observe that $\mathbb{K}=\mathbb{F}=GF(2)$ as $\deg(p)=1$.

\medskip

\noindent
The Segre characteristic of $N$ is $(3,1)$ with vector chains:

 $e_{1} \rightarrow e_{2}\rightarrow e_{3} \rightarrow 0$

  $e_{4} \rightarrow 0$

\noindent
The hyperinvariant and characteristic non hyperinvariant subspaces, according to \cite{MMP13}, are:
$$\Hinv_{\mathbb{K}}(N)=\left\{ 0,\langle e_{2},e_{3},e_{4} \rangle, \langle e_{2}, e_{3} \rangle,\langle e_{3},e_{4}\rangle,\langle e_{3}\rangle ,V\right\},$$

$$\Chinv_{\mathbb{K}}(N) \backslash \Hinv_{\mathbb{K}}(N) =\left\{e_{2}+e_{4}+\langle e_{3}\rangle \right\}.$$

 \end{example}

\section*{Acknowledgments}

The second author is partially supported by grant MTM2015-65361-P 
MINECO/FEDER, UE. The third author is partially supported by grant
MTM2017-83624-P MINECO.

\bibliographystyle{model1a-num-names}

\end{document}